\newtheorem{tm}{Theorem}[section]
\newtheorem{lm}[tm]{Lemma}
\newtheorem{df}[tm]{Definition}
\newtheorem{pr}[tm]{Proposition}
\newcommand{\subscripts}[3]{%
  \@mathmeasure\z@\displaystyle{#2}%
  \global\setbox\@ne\vbox to\ht\z@{}\dp\@ne\dp\z@
  \setbox\tw@\box\@ne
  \@mathmeasure4\displaystyle{\copy\tw@_{#1}}%
  \@mathmeasure6\displaystyle{{#2}_{#3}}%
  \dimen@-\wd6 \advance\dimen@\wd4 \advance\dimen@\wd\z@
  \hbox to\dimen@{}\mathop{\kern-\dimen@\box4\box6}%
}
\newcommand{\nn}{\nonumber}
\newcommand{\e}{\mathrm{e}}
\newcommand{\dd}{\mathrm{d}}
\newcommand{\ve}{\varepsilon}
\newcommand{\ii}{\mathrm{i}}
\newcommand{\al}{\alpha}
\newcommand{\bt}{\beta}
\newcommand{\Gm}{\Gamma}
\newcommand{\dl}{\delta}
\newcommand{\zt}{\zeta}
\newcommand{\sg}{\sigma}
\newcommand{\C}{\mathbb{C}}
\newcommand{\R}{\mathbb{R}}
\newcommand{\N}{\mathbb{N}}
\newcommand{\E}{\mathbb{E}}
\newcommand{\Prime}{\mathbb{P}}
\begin{document}
\title[Asymptotic behaviors of the Riemann 
zeta distribution]
{
Asymptotic behaviors of convolution powers of 
the Riemann zeta distribution
}
\author[T. Aoyama]{Takahiro Aoyama}
\author[R. Namba]{Ryuya Namba}
\author[K. Ota]{Koki Ota}
\date{\today}
\address[T. Aoyama]{Department of Applied Mathematics, 
Faculty of Science, 
Okayama University of Science, 
1-1 Ridaicho, Kita-ku, Okayama 700-0005, Japan}
\email{{\tt{aoyama@xmath.ous.ac.jp}}}
\address[R. Namba]{Department of Mathematics,
Faculty of Education,
Shizuoka University, 836, Ohya, Suruga-ku, Shizuoka, 422-8529, Japan}
\email{{\tt{namba.ryuya@shizuoka.ac.jp}}}
\address[K. Ota]{Agile Solution Department, 
Smart Solution Division, Resona Digital I Inc.,
2-13, Shinsenri-Nishimachi 1-chome, Toyonaka, Osaka, 560-0083, Japan}
\email{{\tt pd307dcy@s.okayama-u.ac.jp}}
\subjclass[2010]{Primary 60E; Secondary 11M}
\keywords{Riemann zeta distribution ; 
convolution power ; local limit theorem.}

\maketitle 

\begin{abstract}
In probability theory, there exist discrete and continuous distributions. 
Generally speaking, we do not have sufficient kinds and properties of discrete ones compared to the continuous ones.
In this paper, we treat the Riemann zeta distribution as a representative of few known discrete distributions with infinite supports.
Some asymptotic behaviors of convolution powers of 
the Riemann zeta distribution are discussed. 
\end{abstract}


\section{{\bf Introduction and main results}}
\label{Intro}

Let $\varphi$ be a complex-valued 
function whose support 
$\mathcal{S}=\mathcal{S}_1 \subset \mathbb{R}$ 
is at most countable. 
For $n \in \N$, the $n$-th convolution power of the function $\varphi$ 
is defined by $\varphi^{*1}:=\varphi$ and 
    \[
    \varphi^{*n}(x)
    :=\sum_{y \in \mathcal{S}}
    \varphi^{*(n-1)}(x-y)\varphi(y), \qquad n=2, 3, \dots, \, 
    x \in \mathcal{S}_n,
    \]
where $\mathcal{S}_n$ denotes the countable support of 
$\varphi^{*n}$ which may be different from $\mathcal{S}$ in general.  

It has been interesting problems to investigate some asymptotic  
behaviors of the convolution powers $\varphi^{*n}$ 
as $n \to \infty$
from not only analytic but probabilistic perspectives. 
Let us first look at the cases where $\mathcal{S}$ is finite. 
One of motivations behind such kinds of problems was found  
in the context of numerical difference schemes 
for partial differential equations. 
We refer to e.g.~Thom\'ee \cite{Thomee1, Thomee2} for details. 
Moreover, motivated by the de Forest's local limit theorems
and statistical data smoothing procedures, 
Greville \cite{Greville} and Schoenberg \cite{Schoen}
treated finitely supported functions 
$\varphi$ with values in $\R$ and established 
their local limit theorems. 
Namely, the uniform convergence of the convolution power 
$\varphi^{*n}(x)$ as $n \to \infty$ was obtained
and its leading term was shown to be an analytic function 
like heat kernels.  
We note that some well-known properties of 
Fourier transforms play 
a crucial role in obtaining such asymptotic behaviors. 
Afterwards,
Randles and Saloff-Coste \cite{RS1} extended 
the local limit theorems to the cases where  
the finitely supported function $\varphi$ is complex-valued. 
Furthermore, the corresponding leading term is shown to be
governed by a complex-valued analytic function which is  
regarded as an evaluation of a function similar to the 
heat kernel at some imaginary time. 
We also refer to Diaconis--Saloff-Coste \cite{DS} 
for related results on
such local limit theorems and 
Randles--Saloff-Coste \cite{RS2} 
for extensions of the results in \cite{RS1}
to multidimensional cases with concrete examples.

In view of probability theory, 
these results exactly help us to reveal 
the detailed asymptotic behaviors of random walks
whose one-step distribution consists of finitely many 
mass points. 
In spite of such developments, 
asymptotic behaviors of convolution powers of 
countably supported functions 
have not been investigated so much, 
though there exist a number of discrete
probability distributions whose support is countable. 
We emphasize that main difficulties in considering these problems 
can be often replaced by the complexities of convolution powers of functions with countable supports.

Therefore, our aim is to provide  
some asymptotic behaviors of convolution powers 
of treatable functions whose support is countable. 
Particularly, in the present paper, 
we try to focus on the 
so-called Riemann zeta distribution on $\R$, which is 
known as an important example of countably supported 
probability distribution on $\R$. 

In order to introduce this distribution, 
we start with the definition
of the Riemann zeta function $\zeta(s)$.  

\begin{df}[Riemann zeta function]
The Riemann zeta function $\zeta(s)$ 
is a function of a complex variable
$s = \sg + {\rm i}t$ for $\sg > 1$ 
and $t \in \R$ defined by   
    \begin{equation}
    \label{zt}
    \zt(s) := \sum^{\infty}_{m=1}\frac{1}{m^s} 
    = \prod_{p \in \Prime} \Big(1-\frac{1}{p^s} \Big)^{-1},
    \end{equation}
where we denote by $\Prime$ the set of all prime numbers.
The product representation of {\rm (\ref{zt})} 
is called the Euler product.
\end{df}

It is known that the function $\zeta(s)$ 
converges absolutely in the half-plane 
$\{\sigma+\ii t \, | \, \sigma>1\}$ 
and uniformly in every compact subset of the half-plane. 
We also note that the Riemann zeta function 
can be extended to a meromorphic function on the 
complex plane $\mathbb{C}$ having a single pole 
at $s=1$ by analytic continuation. 
See e.g., Apostol \cite{Apo} 
for more details on the Riemann zeta function.

The Riemann zeta distribution
is defined as a probability distribution on $\R$
generated by $\zt(s)$.

\begin{df}[Riemann zeta distribution]\label{zetad}
Fix $\sg > 1$. 
A probability distribution $\mu_\sg$ on $\R$
is called a Riemann zeta distribution if
    \begin{equation}
    \label{zdist}
    \mu_\sg(\{- \log m \}) 
    = \frac{m^{-\sg}}{\zt(\sg)}, \qquad m \in \N.
    \end{equation}
\end{df}

Let $\sigma>1$. 
As is seen later in Section \ref{Riemann zeta dist}, 
it is verified that the normalized function 
    \[
    f_\sigma(t):=\frac{\zeta(\sigma+\ii t)}{\zeta(\sigma)}, 
    \qquad t \in \R,
    \]
is a characteristic function 
of the Riemann zeta distribution $\mu_\sigma$. 
Moreover, we can also see that the $\mu_\sigma$ 
is infinitely divisible
(see Propositions \ref{CF zeta} 
and \ref{Prop:zeta-levy-khintchine}). 
Note that we often identify $\mu_\sigma$ with a 
countably supported function 
$\mu_\sigma=\mu_\sigma(x)$ given by 
    \[
    \mu_\sigma(x)=\sum_{m=1}^\infty \frac{m^{-\sigma}}{\zeta(\sigma)}
    \bm{1}_{\{-\log m\}}(x), \qquad x \in \Lambda_1, 
    \]
where $\Lambda_1=\{-\log m \, | \, m \in \mathbb{N}\}$. 
Then, for $n \in \N$, 
the $n$-th convolution power of the function $\mu_\sigma$
is recursively given by $\mu_\sigma^{*1}=\mu_\sigma$ and 
    \[
    \begin{aligned}
    \mu_\sigma^{*n}(x)&=\sum_{y \in \Lambda_{n-1}}
    \mu_\sigma^{*(n-1)}(x-y)\mu_\sigma(y)\\
    &=\sum_{m_1, m_2, \dots, m_n=1}^\infty
    \frac{(m_1m_2\cdots m_n)^{-\sigma}}{\zeta(\sigma)^n}
    \bm{1}_{\{-\log(m_1m_2\cdots m_n)\}}(x), \qquad 
    x \in \Lambda_n, \, n=2, 3, \dots, 
    \end{aligned}
    \]
where the countable support $\Lambda_n$ of $\mu_\sigma^{*n}$ is given by
    \[
    \begin{aligned}
    \Lambda_n&=\{x \in \R \, | \, y \in \Lambda_{n-1}, 
    \, x-y \in \Lambda_1\}\\
    &=\{-\log (m_1m_2 \cdots m_n) \, | \, 
    m_1, m_2, \dots, m_n\in \N\}, \qquad n=2, 3, \dots. 
    \end{aligned}
    \]

By applying the Taylor formula to the function 
$\Gamma_\sigma(t):=\log f_\sigma(t)$ 
on a neighborhood of $0$, 
we know that there exist
$\alpha_\sigma \in \R$ and $\beta_\sigma>0$ such that
    \[
    \Gamma_\sigma(t):= 
    \ii \alpha_\sigma t - \beta_\sigma t^2 
    + o(t^2)
    \]
as $t \to 0$, 
where the constants $\alpha_\sigma$ and $\beta_\sigma$
can be regarded as the expectation and the variance of the 
Riemann zeta distribution $\mu_\sigma$, respectively. 
The explicit representations of these constants will be given 
in Lemma \ref{Lem:alphabeta-rep1}.

The main results of the present paper consist of two claims. 
The first main result is the very local limit theorem 
for the $n$-th convolution power $\mu_\sigma^{*n}$ 
of the Riemann zeta distribution $\mu_\sigma$, 
which reveals the leading term of the asymptotic behavior of $\mu_\sigma^{*n}$ as $n \to \infty$.

\begin{tm}\label{Thm:Local limit theorem}
Let $\sigma>1$.  
Then, we have 
    \begin{equation}\label{asymptotic}
    \mu_\sigma^{*n}(x)
    =\frac{1}{\sqrt{n}}
    p_\sigma\Big(\frac{x-\alpha_\sigma n}{\sqrt{n}}\Big)
    +o\Big(\frac{1}{\sqrt{n}}\Big), 
    \qquad x \in \Lambda_n,
    \end{equation}
as $n \to \infty$, where the function
    \[
    p_\sigma(x)
    :=\frac{1}{2\pi}\int_{\R} \e^{-\ii xu} 
    \e^{-\beta_\sigma u^2} \, \dd u
    =\frac{1}{\sqrt{4\pi \beta_\sigma}}
    \exp\Big(-\frac{x^2}{4\beta_\sigma}\Big), 
    \qquad x \in \R,
    \]
is the heat kernel evaluated at time $\beta_\sigma$. 
\end{tm}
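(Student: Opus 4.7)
The plan is to follow the standard Fourier-analytic strategy for local limit theorems, carefully adapted to the countably supported, almost-periodic setting of $\mu_\sigma^{*n}$.

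First, I would record an integral representation for the atomic mass. Since $\mu_\sigma^{*n}$ is purely atomic with distinct atoms in $\Lambda_n$ and characteristic function $f_\sigma(t)^n$, the Bohr--Fourier inversion formula for almost-periodic functions gives, for every $x \in \Lambda_n$,
\[
\mu_\sigma^{*n}(x) = \lim_{T \to \infty} \frac{1}{2T} \int_{-T}^T \e^{-\ii xt} f_\sigma(t)^n \, \dd t.
\]
The main term in (\ref{asymptotic}) can likewise be written as
\[
\frac{1}{\sqrt{n}} p_\sigma\Big(\frac{x - \alpha_\sigma n}{\sqrt{n}}\Big) = \frac{1}{2\pi} \int_{\R} \e^{-\ii x t}\, \e^{\ii \alpha_\sigma n t - \beta_\sigma n t^2} \, \dd t,
\]
so the theorem reduces to comparing this Cesaro-averaged Fourier integral of $f_\sigma(t)^n$ to the ordinary Fourier integral of its Gaussian surrogate $\e^{\ii \alpha_\sigma n t - \beta_\sigma n t^2}$.

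Next, I would split the frequency axis into a bulk region $|t| \le n^{-1/2+\ep}$ and a tail region $|t| > n^{-1/2+\ep}$ for some small $\ep>0$. In the bulk, the Taylor expansion of $\Gm_\sigma(t) = \log f_\sigma(t)$ near $0$ gives $n\log f_\sigma(t) = \ii \alpha_\sigma n t - \beta_\sigma n t^2 + o(1)$ uniformly, producing a pointwise approximation of $f_\sigma(t)^n$ by the Gaussian surrogate; after rescaling $u = t\sqrt{n}$, multiplying by $\e^{-\ii xt}$, and integrating, this contribution matches the main term up to an error of order $o(1/\sqrt{n})$. The Gaussian integral over the tail is negligible since $\e^{-\beta_\sigma n t^2}$ decays super-polynomially for $|t| \ge n^{-1/2+\ep}$.

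The main obstacle is the tail contribution of $f_\sigma(t)^n$. Because $f_\sigma$ is almost-periodic and $|f_\sigma(t)|$ does not tend to $0$ as $|t| \to \infty$, no uniform bound of the form $|f_\sigma(t)| \le 1 - \eta$ is available. The starting point is the strict inequality $|\zeta(\sigma+\ii t)| < \zeta(\sigma)$ for $t \ne 0$, which follows because equality in $\bigl|\sum_m m^{-\sigma - \ii t}\bigr| \le \sum_m m^{-\sigma}$ would force every phase $m^{-\ii t}$ to coincide, hence $t=0$ by the multiplicative independence of the integers $\geq 2$. To convert this pointwise strict inequality into a quantitative bound on the Cesaro-averaged integral, one must control the upper density of the near-maximal set $\{t : |f_\sigma(t)| > 1 - \eta\}$, using either the Bohr--Jensen theory of almost-periodic functions or direct estimates on $\zeta(\sigma + \ii t)$ via the Euler product. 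Combining the bulk and tail estimates then delivers (\ref{asymptotic}).
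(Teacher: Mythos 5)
Your starting point genuinely differs from the paper's and is more defensible as far as it goes. The paper's proof rests on the identity $\mu_\sigma^{*n}(x) = \frac{1}{2\pi}\int_{-\pi}^\pi f_\sigma(t)^n \e^{-\ii xt}\,\dd t$, i.e.\ \eqref{inverse-Fourier}, which is the Fourier inversion formula for $\mathbb{Z}$-supported (lattice) distributions and presupposes that $f_\sigma$ is $2\pi$-periodic. Here the support $\Lambda_n = \{-\log M : M\in\mathbb{N}\}$ is not a lattice, $f_\sigma(t)=\zeta(\sigma+\ii t)/\zeta(\sigma)$ is not $2\pi$-periodic, and expanding the Dirichlet series shows that the right-hand side of \eqref{inverse-Fourier} equals $\sum_{N\ge 1}\frac{d_n(N)}{\zeta(\sigma)^n N^\sigma}\cdot\frac{\sin\bigl(\pi(\log N + x)\bigr)}{\pi(\log N + x)}$, which is not $\mu_\sigma^{*n}(x)$. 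Replacing this by the Bohr--Fourier mean, as you do, is the correct move, and the same observation shows that the paper's treatment of the tail via $\sup_{\delta\le|t|\le\pi}|f_\sigma(t)|<1$ has no analogue once the domain is all of $\mathbb{R}$: by almost periodicity, $\sup_{|t|\ge\delta}|f_\sigma(t)|=1$.

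The gap in your own proposal is structural and appears at the bulk/tail split. For each fixed $n$ the bulk $|t|\le n^{-1/2+\varepsilon}$ is a compact interval, so its contribution to $\frac{1}{2T}\int_{-T}^T f_\sigma(t)^n\e^{-\ii xt}\,\dd t$ is $O(1/T)$ and vanishes as $T\to\infty$; in the Ces\`aro framework the bulk contributes $0$, not the Gaussian main term $\frac{1}{\sqrt{n}}p_\sigma(\cdot)$. The entire value of $\mu_\sigma^{*n}(x)$ is therefore produced by what you call the tail, i.e.\ by the almost-periodic near-returns of $f_\sigma(t)^n$ at arbitrarily large $|t|$, which is exactly the part your proposal only sketches (``Bohr--Jensen theory or direct Euler-product estimates''). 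There is in fact a reason no estimate of this type can close the gap: in a window $|x-\alpha_\sigma n|\le C\sqrt{n}$ the atoms sit at $-\log M$ with $M$ ranging over an interval containing on the order of $\e^{-\alpha_\sigma n + C\sqrt{n}}$ integers, so exponentially many atoms carry total mass $O(1)$ and the individual masses are exponentially small in $n$, not of order $1/\sqrt{n}$ as the main term $\frac{1}{\sqrt{n}}p_\sigma(O(1))$ in \eqref{asymptotic} would force. The pointwise asymptotic cannot hold uniformly over $x\in\Lambda_n$; for a non-lattice discrete law such as this one, a genuine local limit theorem must concern the mass in shrinking intervals rather than at individual atoms, and neither your argument nor the paper's proves (or can prove) \eqref{asymptotic} as stated.
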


The second main result of the present paper is 
the upper  bound of the supremum norm 
of $\mu_\sigma^{*n}$, which is stated as follows: 

\begin{tm}\label{Thm:lower and upper bound}
Let $\sigma>1$. 
Then, there exists a positive constant $C_\sigma>0$
such that 
    \begin{equation}
     \|\mu_\sigma^{*n}\|_\infty
     :=\sup_{x \in \Lambda_n}\mu_\sigma^{*n}(x)
    \le \frac{C_\sigma}{\sqrt{n}}, 
    \qquad n \in \N. 
    \end{equation}
\end{tm}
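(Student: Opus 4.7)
The plan is to deduce Theorem \ref{Thm:lower and upper bound} as a direct corollary of the local limit theorem (Theorem \ref{Thm:Local limit theorem}). The key observation is that the Gaussian density $p_\sigma$ is bounded, attaining its maximum $p_\sigma(0) = 1/\sqrt{4\pi\beta_\sigma}$ at the origin. Provided the $o(1/\sqrt{n})$ error term in Theorem \ref{Thm:Local limit theorem} is uniform in $x \in \Lambda_n$, the desired bound on $\|\mu_\sigma^{*n}\|_\infty$ is essentially the supremum of the leading Gaussian term.

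More precisely, I would first note
    \[
    \frac{1}{\sqrt{n}}\, p_\sigma\!\Big(\frac{x-\alpha_\sigma n}{\sqrt{n}}\Big)
    \le \frac{1}{\sqrt{4\pi n \beta_\sigma}}, \qquad x \in \R,
    \]
and combine with Theorem \ref{Thm:Local limit theorem} to obtain, uniformly in $x \in \Lambda_n$,
    \[
    \mu_\sigma^{*n}(x) \le \frac{1}{\sqrt{4\pi n \beta_\sigma}} + o\!\Big(\frac{1}{\sqrt{n}}\Big)
    \]
as $n \to \infty$. Consequently, there exists $N_\sigma \in \N$ so that $\sqrt{n}\,\|\mu_\sigma^{*n}\|_\infty$ is uniformly bounded for $n \ge N_\sigma$. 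For the finitely many $n < N_\sigma$, the trivial estimate $\mu_\sigma^{*n}(x) \le 1$ (since $\mu_\sigma^{*n}$ is a probability mass function on $\Lambda_n$) yields $\mu_\sigma^{*n}(x) \le \sqrt{N_\sigma}/\sqrt{n}$. Taking $C_\sigma$ to be the maximum of these two bounds concludes the proof.

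The main obstacle is to ensure that the error term in Theorem \ref{Thm:Local limit theorem} is genuinely uniform in the spatial variable. In a Fourier-analytic proof of the local limit theorem, the error typically comes from tail integrals of $|f_\sigma(t)|^n$ in the frequency variable and is therefore naturally independent of $x$, so uniformity should come for free. If instead one wishes to establish Theorem \ref{Thm:lower and upper bound} independently---say, as a lemma invoked in the proof of Theorem \ref{Thm:Local limit theorem}---one could proceed via a Parseval-type identity for discrete measures with almost-periodic characteristic function,
    \[
    \|\mu_\sigma^{*n}\|_\infty^2 \le \|\mu_\sigma^{*n}\|_2^2
    = \lim_{T\to\infty}\frac{1}{2T}\int_{-T}^T |f_\sigma(t)|^{2n}\,\dd t,
    \]
and then estimate the right-hand side using the Euler product representation of $f_\sigma$ together with the quadratic decay $|f_\sigma(t)|^2 = \e^{-2\beta_\sigma t^2 + o(t^2)}$ near $t=0$ that follows from the Taylor expansion of $\Gamma_\sigma$.
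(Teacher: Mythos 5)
Your main argument is correct, but it takes a genuinely different route from the paper. The paper proves Theorem \ref{Thm:lower and upper bound} directly and independently of the local limit theorem: it splits the Fourier inversion formula \eqref{inverse-Fourier} at a small $\delta$, uses Lemma \ref{Lem:pre} (itself a direct consequence of the near-origin bound $|f_\sigma(t)|\le \exp(-\beta_\sigma t^2/2)$ from Lemma \ref{Lem:upper estimate}) to bound the central piece by $1/\sqrt{n\beta_\sigma}$, and disposes of the tail $\delta\le|t|<\pi$ by the strict inequality $\sup_{\delta\le|t|<\pi}|f_\sigma(t)|<1$, which yields exponential decay. You instead deduce the bound as a corollary of Theorem \ref{Thm:Local limit theorem}, taking the supremum of the Gaussian leading term and handling small $n$ with the trivial bound $\mu_\sigma^{*n}(x)\le 1$. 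This works, and your handling of the finitely many small $n$ is correct, but it leans on the fact that the $o(1/\sqrt{n})$ error in Theorem \ref{Thm:Local limit theorem} is uniform in $x\in\Lambda_n$. That uniformity is indeed established in the paper's proof of the LLT (the bound $|\sqrt{n}\mu_\sigma^{*n}(x)-p_\sigma(\cdot)|\le\ve$ holds for all $x\in\Lambda_n$), but it is not visible from the bare statement of the theorem, so a reader following your argument would need to go back into that proof to verify it. The paper's direct route avoids this dependency and is therefore slightly more self-contained; your route is shorter once the uniformity of the LLT is granted. Your sketched Parseval-type alternative is a reasonable independent strategy, though it would need more care: the almost-periodic $L^2$ identity you invoke is not proved in the paper, and in any case Fourier inversion over $[-\pi,\pi]$ (as the paper uses) already gives $\|\mu_\sigma^{*n}\|_\infty \le \frac{1}{2\pi}\int_{-\pi}^{\pi}|f_\sigma(t)|^n\,\dd t$ directly, which is simpler than the Ces\`aro-mean identity.
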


Let $\{X_\sigma^{(n)}\}_{n=1}^\infty$
be a sequence of i.i.d.~random variables whose 
common law is $\mu_\sigma$ with $\sigma>1$. 
Then, the usual central limit theorem implies that
the random variable defined by 
    \[
    \frac{X_\sigma^{(1)}+X_\sigma^{(2)}+
    \cdots + X_\sigma^{(n)}-n\alpha_\sigma}{\sqrt{n}},
    \qquad n \in \N,
    \]
converges in law to the normal distribution
$N(0, 2\beta_\sigma)$ as $n \to \infty$. Namely, we obtain 
    \[
    \lim_{n \to \infty}
    \mathbf{P}\Big(a \le 
    \frac{X_\sigma^{(1)}+X_\sigma^{(2)}+
    \cdots + X_\sigma^{(n)}-n\alpha_\sigma}{\sqrt{n}}
    \le b \Big)
    =\int_a^b \frac{1}{\sqrt{4\pi \beta_\sigma}}
    \exp\Big(-\frac{x^2}{4\beta_\sigma}\Big)\, \dd x 
    \]
for all $-\infty \le a < b \le \infty$. 
Our local limit theorem 
(Theorem \ref{Thm:Local limit theorem}) 
can be regarded as a refinement of the central limit theorem
for the sequence of Riemann zeta random variables 
$\{X_\sigma^{(n)}\}_{n=1}^\infty$. 
Moreover, it turns out from 
Theorem \ref{Thm:lower and upper bound}
that the supremum norm of $\mu_\sigma^{*n}(x)$
decays on at most the order of $\sqrt{n}$. 
We claim that our results are to be a specific case of known rather 
unknown examples of local limit theorems usually discussed in probability theory.

The rest of the present paper is organized as follows: 
We review some basics of 
the Riemann zeta distribution $\mu_\sigma, \, \sigma>1,$ in Section 2. 
In particular, the expectation and the variance 
of the Riemann zeta random variable are computed. 
We establish local behaviors of the characteristic function $f_\sigma(t)$
of $\mu_\sigma$ on a neighborhood of $0$ in Section 3. 
More precisely, we establish both the lower and the upper estimates 
of $|f_\sigma(t)|$ on the neighborhood 
by making use of the 
L\'evy--Khintchine representation of $f_\sigma(t)$
(see Lemmas \ref{Lem:lower estimate} and \ref{Lem:upper estimate}). 
Section 4 is devoted to the proof of the local limit theorem 
for $\mu_\sigma^{*n}$ as $n \to \infty$ 
(see Theorem \ref{Thm:Local limit theorem}). 
We employ the standard Fourier analysis technique 
and the upper estimate of $f_\sigma(t)$ in order to find out
the leading term of $\mu_\sigma^{*n}$ as $n$ tends to infinity. 
We also give the proof of Theorem \ref{Thm:lower and upper bound}, which provides the upper bound of the supremum norm of 
$\mu_\sigma^{*n}$, in Section 5. 
We give some further comments towards 
possible extensions of our study in Section 6 as well.



\section{{\bf The Riemann zeta distribution}}
\label{Riemann zeta dist}

Let $\sigma>1$ and consider 
the Riemann zeta distribution $\mu_\sigma$
defined by \eqref{zdist}. 
In this section, several properties of the
Riemann zeta distribution are exhibited. 
We refer to Lin--Hu \cite{LH} for related topics. 
The following fact is well-known. 

\begin{pr}[see e.g., Gnedenko--Kolmogorov \cite{GK}]
\label{CF zeta}
The characteristic function 
$f_\sg(t)$ of \eqref{zdist} is given by 
    \[
    f_\sg(t) := \frac{\zt(\sg + {\rm i}t)}{\zt(\sg)}, 
    \qquad t \in \R. 
    \]
\end{pr}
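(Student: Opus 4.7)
The plan is to compute the characteristic function directly from Definition~\ref{zetad}, using the fact that the support $\Lambda_1 = \{-\log m \mid m \in \N\}$ has a very convenient form under the exponential: $\e^{\ii t(-\log m)} = m^{-\ii t}$. This turns the characteristic function into a Dirichlet series, which is exactly what makes $\zeta(\sigma + \ii t)$ appear.

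Concretely, I would start from the definition
    \[
    f_\sg(t) = \int_{\R} \e^{\ii t x} \, \mu_\sg(\dd x)
    = \sum_{m=1}^\infty \e^{-\ii t \log m} \, \mu_\sg(\{-\log m\}),
    \]
substitute \eqref{zdist} for $\mu_\sg(\{-\log m\})$, and use the identity $\e^{-\ii t \log m} = m^{-\ii t}$ to rewrite each summand as $m^{-(\sg + \ii t)}/\zt(\sg)$. Summing then yields $\zt(\sg + \ii t)/\zt(\sg)$ by the Dirichlet series definition \eqref{zt} of the Riemann zeta function.

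The only nontrivial point to address is the legitimacy of this manipulation, namely the interchange of summation and integration (or, equivalently, the fact that the series defining $f_\sg(t)$ converges absolutely so that the rearrangement into a Dirichlet series is valid). This follows immediately from the assumption $\sg > 1$: the bound $|m^{-(\sg + \ii t)}| = m^{-\sg}$ gives $\sum_m |m^{-(\sg + \ii t)}|/\zt(\sg) = 1 < \infty$, which also confirms that $\mu_\sg$ is a genuine probability distribution and that $\zt(\sg + \ii t)$ is well-defined for all $t \in \R$ (since $\mathrm{Re}(\sg + \ii t) = \sg > 1$). There is no real obstacle here; the proof is essentially a one-line calculation, and the main ``content'' of the statement is recognizing that the normalization constant $\zt(\sg)^{-1}$ in the definition of $\mu_\sg$ was chosen precisely so that this identification with the shifted zeta function on the critical-strip-free half-plane $\mathrm{Re}(s) > 1$ works cleanly.
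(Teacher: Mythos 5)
Your proof is correct and is the standard one-line computation; the paper itself omits the argument, merely citing Gnedenko--Kolmogorov, so your derivation is exactly what the reference supplies. The key identity $\e^{\ii t(-\log m)} = m^{-\ii t}$ converting the sum over the support into the Dirichlet series for $\zt(\sg+\ii t)$, together with the absolute convergence guaranteed by $\sg>1$, is all that is needed.
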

\noindent
We note that the function $f_{\sigma}(t)$ is also obtained by 
the Fourier transform of the function $\mu_\sigma(x)$. 
The Riemann zeta distribution is known to be  infinitely divisible. 

\begin{pr}[see e.g., Gnedenko--Kolmogorov \cite{GK}]
\label{Prop:zeta-levy-khintchine}
Let $\mu_{\sigma}$ be a Riemann zeta distribution on $\mathbb{R}$ 
with the characteristic function $f_{\sigma}(t)$.
Then, $\mu_{\sigma}$ is compound Poisson on 
$\mathbb{R}$ and it holds that
    \begin{align}
    \log f_{\sigma}(t)=
    \sum_{p \in \mathbb{P}}
    \sum_{r=1}^{\infty}
    \frac{p^{-r\sigma}}{r}\left(\e^{-{\rm i}tr\log p}-1\right)
    =\int_0^{\infty}\left(\e^{-{\rm i}tx}-1\right)N_{\sigma}(\dd x), \qquad 
    t \in \mathbb{R},
    \label{zeta CF}
    \end{align}
where $N_{\sigma}(\dd x)$ is a finite L\'evy measure on $\mathbb{R}$ given by
    \begin{align}\label{Levy-measure}
    N_{\sigma}(\dd x)
    =\sum_{p \in \mathbb{P}}
    \sum_{r=1}^{\infty}\frac{p^{-r\sigma}}{r}
    \delta_{r\log p}(\dd x).
    \end{align}
Here, $\delta_a(\dd x)$ denotes the delta measure 
at $a \in \R$. 
\end{pr}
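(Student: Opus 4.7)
The plan is to derive both equalities in \eqref{zeta CF} directly from the Euler product appearing in \eqref{zt}, and then read off the compound Poisson structure from the resulting expression.

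First I would take the logarithm of the Euler product. Since $|p^{-s}|=p^{-\sg}<1$ for every $p\in\Prime$ when $\sg>1$, the principal branch of the logarithm and the Taylor expansion $-\log(1-z)=\sum_{r=1}^\infty z^r/r$ are both applicable, giving
    \[
    \log\zt(s)=-\sum_{p\in\Prime}\log(1-p^{-s})
    =\sum_{p\in\Prime}\sum_{r=1}^\infty \frac{p^{-rs}}{r},
    \qquad s=\sg+\ii t,\ \sg>1.
    \]
The rearrangement of the double sum is justified by absolute convergence: $\sum_p\sum_r p^{-r\sg}/r=\log\zt(\sg)<\infty$. Here I would note carefully that $\zt(s)\ne 0$ on $\{\sg>1\}$, so $\log f_\sg(t)=\log\zt(\sg+\ii t)-\log\zt(\sg)$ is unambiguously defined as a continuous function of $t$ (taking the value $0$ at $t=0$) and coincides with the canonical logarithm of a characteristic function of an infinitely divisible law.

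Next I would subtract the $\sg$-expansion from the $(\sg+\ii t)$-expansion. Writing $p^{-r(\sg+\ii t)}=p^{-r\sg}\e^{-\ii tr\log p}$ yields
    \[
    \log f_\sg(t)=\sum_{p\in\Prime}\sum_{r=1}^\infty
    \frac{p^{-r\sg}}{r}\bigl(\e^{-\ii tr\log p}-1\bigr),
    \]
which is the first equality of \eqref{zeta CF}. To recast this as an integral against the measure $N_\sg(\dd x)$ defined in \eqref{Levy-measure}, I would interpret the double series as integration of the function $x\mapsto \e^{-\ii tx}-1$ against the discrete measure that puts mass $p^{-r\sg}/r$ at the point $r\log p$; this is exactly the definition of $N_\sg$. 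The bound $|\e^{-\ii tx}-1|\le 2$ together with $N_\sg(\R)=\log\zt(\sg)<\infty$ lets me pull the summations inside a single Lebesgue integral without any convergence issues.

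Finally, to conclude that $\mu_\sg$ is compound Poisson, I would observe that the total mass $\ld_\sg:=N_\sg(\R)=\log\zt(\sg)$ is finite, so $\nu_\sg:=\ld_\sg^{-1}N_\sg$ is a probability measure supported on $(0,\infty)$, and the identity above can be rewritten as $f_\sg(t)=\exp\bigl(\ld_\sg(\wh{\nu_\sg}(t)-1)\bigr)$, where $\wh{\nu_\sg}$ denotes the characteristic function of $\nu_\sg$. This is the defining form of a compound Poisson characteristic function with intensity $\ld_\sg$ and jump distribution $\nu_\sg$. None of the steps present a real obstacle; the only delicate point is a careful justification of the interchanges of summation and of the choice of logarithm branch, which is handled cleanly by the absolute convergence provided by $\sg>1$.
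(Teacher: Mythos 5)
Your proof is correct. Note that the paper does not give its own proof of Proposition~\ref{Prop:zeta-levy-khintchine}; it states the result as known, citing Gnedenko--Kolmogorov \cite{GK}. Your derivation is the standard one: take the logarithm of the Euler product in \eqref{zt}, expand $-\log(1-p^{-s})$ via the series $\sum_{r\ge 1} p^{-rs}/r$ (valid since $|p^{-s}|=p^{-\sigma}<1$ for $\sigma>1$), subtract the $t=0$ value, and recognize the resulting double sum as integration of $\e^{-\ii tx}-1$ against $N_\sigma$. The two points you rightly flag as requiring care are in order: the distinguished logarithm is unambiguous because $\zeta(\sigma+\ii t)\ne 0$ on $\sigma>1$ so that $\log f_\sigma$ is the continuous branch with $\log f_\sigma(0)=0$, and all rearrangements are justified by the absolute bound $\sum_p\sum_r p^{-r\sigma}/r=\log\zeta(\sigma)<\infty$, which also gives the finite total mass $N_\sigma(\R)=\log\zeta(\sigma)$ needed to identify $f_\sigma(t)=\exp\bigl(\lambda_\sigma(\wh{\nu_\sigma}(t)-1)\bigr)$ as compound Poisson with intensity $\lambda_\sigma=\log\zeta(\sigma)$ and jump law $\nu_\sigma=\lambda_\sigma^{-1}N_\sigma$. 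Nothing is missing.
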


By direct calculations, 
the first and the second derivatives of $\zt(s)$ are given by  
    \begin{align}
    \zeta'(s) 
    = \sum^{\infty}_{m=1} \frac{ -\log m}{m^s}, \qquad 
    \zeta''(s)
    = \sum^{\infty}_{m=1} \frac{(\log m)^2}{m^s},
    \label{2nd}
    \end{align}
respectively. 
Let $X_\sigma$ be a random variable whose distribution is  $\mu_\sigma$. 
Then, the expectations $\mathbb{E}[X_\sigma]$, $\mathbb{E}[X_\sigma^2]$ 
and the variance $\mathrm{Var}(X_\sigma)$ 
of $X_\sigma$ are given in the following. 

\begin{pr}\label{Prop:exp-var}
It holds that 
    \[
    \E[X_\sigma] =\frac{\zt '(\sg)}{\zt(\sg)} , \qquad 
    \E[X_\sigma^2]=\frac{\zt''(\sg)}{\zt(\sg)}, \qquad
    \mathrm{Var}(X_\sigma)=\frac{1}{\zt(\sg)^2}  
    \{\zeta(\sigma)\zt''(\sg) - \zt'(\sg)^2\}.
    \]
\end{pr}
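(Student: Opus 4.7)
The plan is to prove Proposition \ref{Prop:exp-var} by direct computation from the definition of the Riemann zeta distribution. Since $\mu_\sigma$ is a discrete probability distribution concentrated on the countable set $\Lambda_1 = \{-\log m \, | \, m \in \mathbb{N}\}$ with $\mu_\sigma(\{-\log m\}) = m^{-\sigma}/\zeta(\sigma)$, the expectation of any reasonable function $h(X_\sigma)$ is given by
    \[
    \E[h(X_\sigma)] = \sum_{m=1}^\infty h(-\log m) \cdot \frac{m^{-\sigma}}{\zeta(\sigma)},
    \]
provided the series converges absolutely. The computation of $\E[X_\sigma]$ and $\E[X_\sigma^2]$ will thus reduce to recognizing the resulting series as the Dirichlet series \eqref{2nd} for $\zeta'(\sigma)$ and $\zeta''(\sigma)$, respectively.

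First, I would apply the above formula with $h(x)=x$ to obtain
    \[
    \E[X_\sigma] = \frac{1}{\zeta(\sigma)}\sum_{m=1}^\infty \frac{-\log m}{m^\sigma},
    \]
and then quote the first identity in \eqref{2nd} to identify the inner sum as $\zeta'(\sigma)$. Similarly, taking $h(x)=x^2$ and recognizing $\sum_{m=1}^\infty (\log m)^2 m^{-\sigma}=\zeta''(\sigma)$ yields the claimed formula for $\E[X_\sigma^2]$. The expression for $\mathrm{Var}(X_\sigma)$ then follows immediately from the identity $\mathrm{Var}(X_\sigma)=\E[X_\sigma^2]-\E[X_\sigma]^2$ by placing both terms over the common denominator $\zeta(\sigma)^2$.

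The only minor obstacle is justifying the absolute convergence that underlies the interchange of summation implicit in the application of the expectation formula, as well as the termwise differentiation of $\zeta(s)$ leading to \eqref{2nd}. Both follow from the fact that, for any fixed $\sigma>1$, the series $\sum_{m=1}^\infty (\log m)^k m^{-\sigma}$ converges for every $k \ge 0$, since $(\log m)^k = o(m^\epsilon)$ for any $\epsilon>0$ and one may choose $\epsilon < \sigma - 1$. Thus $\sum_m (\log m)^k / m^\sigma \le C_\epsilon \sum_m m^{-(\sigma - \epsilon)} <\infty$, which legitimizes all interchanges. Since the excerpt already states the differentiated identities \eqref{2nd} as known, the proof itself becomes essentially a one-line verification for each of the three formulas.
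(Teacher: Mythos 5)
Your proposal is correct and follows essentially the same route as the paper: compute $\E[X_\sigma]$ and $\E[X_\sigma^2]$ directly as Dirichlet series over the mass points $-\log m$, identify them via \eqref{2nd}, and then use $\mathrm{Var}(X_\sigma)=\E[X_\sigma^2]-(\E[X_\sigma])^2$. The only difference is that you spell out the absolute-convergence justification, which the paper leaves implicit.
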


\begin{proof}
It follows from  \eqref{2nd} that
    \begin{align*}
    \E[X_\sigma] 
    &= \sum_{m=1}^\infty (- \log m) \frac{m^{-\sg}}{\zt(\sg)} 
    =  \sum_{m=1}^\infty \frac{(- \log m)}{m^\sg}   \frac{1}{\zt(\sg)}
    = \frac{\zt '(\sg)}{\zt(\sg)}, \\
    \E[X_\sigma^2] &= \sum_{m=1}^\infty (- \log m)^2   \frac{m^{-\sg}}{\zt(\sg)} 
    =\sum_{m=1}^\infty \frac{(\log m)^2}{m^\sg}\frac{1}{\zt(\sg)}
    = \frac{\zt''(\sg)}{\zt(\sg)}, \\
     \mathrm{Var}(X_\sigma) &= \E[X_\sigma^2] - (\E[X_\sigma])^2\\
    &= \frac{\zt''(\sg)}{\zt(\sg)} 
    - \Big(\frac{\zt '(\sg)}{\zt(\sg)}\Big)^2
    = \frac{1}{\zt(\sg)^2}  
    \{\zeta(\sigma)\zt''(\sg) - \zt'(\sg)^2\},
    \end{align*}
which are the desired equalities. 
\end{proof}



\section{{\bf Local behaviors of the Riemann zeta distribution}}

Let us fix $\sigma>1$ and consider 
the  characteristic function $f_\sigma : \R \to \C$ 
of the Riemann zeta distribution $\mu_\sigma$
given in Proposition \ref{CF zeta}. 
By definition, we have $f_\sigma(0)=1$ 
and $|f_\sigma(t)| \le 1$
for $t \in \mathbb{R}$. 
In particular, $|f_\sigma(t)|$ attains its maximum 1 at $t=0$. 
Therefore, we are interested in some local behaviors of $f_\sigma(t)$ at $t=0$. 
In this section, we give both the lower and the upper bounds of $|f_\sigma(t)|$ on a neighborhood of $t=0$, 
which play a key role in the proof of main theorems. 
 
We put
$\Gamma_\sigma(t) \equiv \log f_{\sigma}(t), 
\, t \in \mathbb{R}$.
Since the function $\Gamma_\sg(t)$ is analytic on a 
neighborhood of $0$, 
we can consider its convergent Taylor series
    \[
    \Gm_\sigma(t) = \sum_{l = 1}^{\infty} a_l(\sigma) t^l
    \]
on a neighborhood of $0$.
We give the explicit representation of the coefficient 
$a_l(\sigma), \, l=1, 2, \dots$, 
by using Proposition \ref{Prop:zeta-levy-khintchine}. 

\begin{lm}\label{coef}
The coefficient $a_l(\sigma)$, $l=1, 2, \dots,$ is given by 
    \[
    a_l(\sigma) 
    = \sum_{p \in \Prime}
    \sum_{r = 1}^{\infty}
    \frac{p^{-r\sg}}{r} 
    \frac{(-{\rm i}r \log p)^l}{l!}, 
    \qquad l=1, 2, \dots.
    \]
\end{lm}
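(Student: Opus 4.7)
The plan is to start from the L\'evy--Khintchine representation established in Proposition \ref{Prop:zeta-levy-khintchine}, expand the exponential factor as a power series in $t$, and then interchange the order of summation to read off the Taylor coefficients. Concretely, Proposition \ref{Prop:zeta-levy-khintchine} gives
    \[
    \Gamma_\sigma(t)=\log f_\sigma(t)
    =\sum_{p \in \Prime}\sum_{r=1}^{\infty}
    \frac{p^{-r\sigma}}{r}\bigl(\e^{-\ii t r \log p}-1\bigr),
    \]
and for each $(p,r)$ the elementary expansion
    \[
    \e^{-\ii t r \log p}-1
    =\sum_{l=1}^{\infty}\frac{(-\ii r \log p)^{l}}{l!}\,t^{l}
    \]
holds on all of $\R$. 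Formally substituting and switching the order of the three summations yields exactly the claimed formula for $a_l(\sigma)$, so the only real content is to check that this interchange is legitimate on some neighborhood of $t=0$.

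The main obstacle, and the one step I would carry out carefully, is this absolute-convergence justification. I would fix $t \in \R$ with $|t|<\sigma-1$ and bound the absolute triple series by
    \[
    \sum_{p \in \Prime}\sum_{r=1}^{\infty}\sum_{l=1}^{\infty}
    \frac{p^{-r\sigma}}{r}\,\frac{(|t|\,r\log p)^{l}}{l!}
    =\sum_{p \in \Prime}\sum_{r=1}^{\infty}
    \frac{p^{-r\sigma}}{r}\bigl(\e^{|t|\,r\log p}-1\bigr)
    =\sum_{p \in \Prime}\sum_{r=1}^{\infty}
    \frac{p^{-r(\sigma-|t|)}-p^{-r\sigma}}{r},
    \]
where the innermost $l$-sum was computed using $\e^{x}-1=\sum_{l\ge 1}x^l/l!$. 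Using $\sum_{r\ge 1}r^{-1}p^{-rs}=-\log(1-p^{-s})$ and then the Euler product for $\zeta$ (both valid for $s>1$), this majorant equals $\log\zeta(\sigma-|t|)-\log\zeta(\sigma)<\infty$, because $\sigma-|t|>1$. Hence Fubini's theorem for series applies, the three summations may be freely reordered, and the coefficient of $t^l$ in the resulting power series is exactly
    \[
    a_l(\sigma)
    =\sum_{p \in \Prime}\sum_{r=1}^{\infty}
    \frac{p^{-r\sigma}}{r}\,\frac{(-\ii r\log p)^{l}}{l!},
    \qquad l=1,2,\dots,
    \]
as required. By uniqueness of Taylor coefficients, these are the $a_l(\sigma)$ of the convergent Taylor expansion of $\Gamma_\sigma$ around $0$, completing the proof. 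The one subtlety I would flag is that the argument simultaneously yields a radius of convergence at least $\sigma-1$ for the Taylor series, which is consistent with what one needs for the local analysis in the next section.
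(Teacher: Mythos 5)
Your proof is correct and follows essentially the same route as the paper: expand $\e^{-\ii t r\log p}-1$ as a power series in $t$ inside the L\'evy--Khintchine representation from Proposition~\ref{Prop:zeta-levy-khintchine} and interchange the order of summation. The only difference is that you explicitly justify the interchange via an absolute-convergence estimate (majorizing by $\log\zeta(\sigma-|t|)-\log\zeta(\sigma)$ for $|t|<\sigma-1$), a step the paper leaves implicit; this is a welcome addition rather than a departure.
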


\begin{proof}
By applying  Proposition \ref{Prop:zeta-levy-khintchine}, we have
    \begin{align*}
    \Gm_\sigma(t) 
    &= \sum_{p \in \Prime}
    \sum ^{\infty} _{r = 1}
    \frac{p^{-r \sg}}{r} 
    (\e^{-{\rm i} tr\log p}-1) \\
    & = \sum_{p \in \Prime}
    \sum_{r = 1}^{\infty}
    \frac{p^{-r\sg}}{r} 
    \Big( \sum _{l=1} ^\infty 
    \frac{(-{\rm i}tr \log p)^l}{l!}\Big)
    = \sum _{l=1} ^\infty 
    \Big(\sum_{p \in \Prime}\sum_{r = 1}^{\infty}
    \frac{p^{-r\sg}}{r} 
    \frac{(-{\rm i}r \log p)^l}{l!} \Big) t^l
    \end{align*}
for every $t$ in a neighborhood of $0$. 
\end{proof}

The following proposition gives 
the lower estimate of the function 
$|f_\sigma(t)|$ on a neighborhood of $0$. 

\begin{lm}\label{Lem:lower estimate}
There exists a positive constant $C_\sigma>0$ such that 
$\exp(-C_\sigma t^2) \leq |f_\sg(t)|$
on a neighborhood of $0$. 
\end{lm}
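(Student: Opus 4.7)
The plan is to exploit the L\'evy--Khintchine representation of $\log f_\sg(t)$ established in Proposition \ref{Prop:zeta-levy-khintchine}. Since $\exp(-C_\sg t^2) \le |f_\sg(t)|$ is equivalent to $\log|f_\sg(t)| \ge -C_\sg t^2$, the first step is to reduce the problem to a lower bound on the real part of $\Gm_\sg(t)$. Taking real parts in \eqref{zeta CF} yields
\[
\log |f_\sg(t)| = \mathrm{Re}\,\Gm_\sg(t) = \int_0^\infty (\cos(tx) - 1)\, N_\sg(\dd x),
\]
which is well-defined because $N_\sg$ is a finite L\'evy measure supported on $(0,\infty)$; in particular $f_\sg$, being a compound Poisson characteristic function, has no zeros.

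The second step is to apply the elementary inequality $\cos y - 1 \ge -y^2/2$ pointwise inside the integral, giving
\[
\log|f_\sg(t)| \ge -\frac{t^2}{2} \int_0^\infty x^2\, N_\sg(\dd x).
\]
It then remains to verify that the second moment of $N_\sg$ is finite. By the explicit form \eqref{Levy-measure},
\[
\int_0^\infty x^2\, N_\sg(\dd x) = \sum_{p \in \Prime}\sum_{r=1}^\infty r(\log p)^2 p^{-r\sg},
\]
and summing the geometric-type series in $r$ produces $\sum_{p \in \Prime}(\log p)^2 p^{-\sg}(1-p^{-\sg})^{-2}$, which is dominated by a constant multiple of $\sum_{m=1}^\infty (\log m)^2 m^{-\sg} = \zt''(\sg)$, and this is finite for $\sg > 1$ by \eqref{2nd}. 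Setting $C_\sg := \tfrac{1}{2}\int_0^\infty x^2\, N_\sg(\dd x)$ completes the argument; in fact the resulting estimate holds for every $t \in \R$, so the restriction to a neighborhood of $0$ is inessential.

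The argument is essentially routine once the L\'evy--Khintchine representation is recognised as the correct tool: the negativity of the Taylor coefficient $a_2(\sg)$ from Lemma \ref{coef} is precisely what drives the bound, and the compound Poisson structure makes it transparent. The only mildly technical point is the integrability of $x^2$ against $N_\sg$, which reduces to the well-known finiteness of $\zt''(\sg)$; no genuine obstacle appears.
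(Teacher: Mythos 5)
Your proof is correct, and it takes a genuinely different route from the paper's. The paper manipulates the Taylor series of $\Gm_\sg$ from Lemma \ref{coef}: taking real parts (so that only even powers of $t$ survive), it separates the $t^{4j}$ terms, which carry positive coefficients, from the $t^{4j-2}$ terms, which carry negative ones, discards the positive block, and then replaces $t^{4j-2}$ by $t^2$ for $|t|\le 1$. You instead work directly with the L\'evy--Khintchine integral \eqref{zeta CF}: taking real parts gives $\log|f_\sg(t)|=\int_0^\infty(\cos(tx)-1)\,N_\sg(\dd x)$, the elementary pointwise inequality $\cos y-1\ge -y^2/2$ gives $\log|f_\sg(t)|\ge -\tfrac{t^2}{2}\int_0^\infty x^2\,N_\sg(\dd x)$, and the second moment of $N_\sg$ is finite because it is dominated by a constant times $\zt''(\sg)$. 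Your route buys a global bound $|f_\sg(t)|\ge \e^{-\bt_\sg t^2}$ valid on all of $\R$, with the sharp constant $\bt_\sg=\tfrac12\int_0^\infty x^2\,N_\sg(\dd x)$ identified via Lemma \ref{Lem:alphabeta-rep1}. It is also more robust: the constant produced by the paper's substitution $t^{4j-2}\mapsto t^2$, namely $\sum_{j}\sum_{p}\sum_{r}\tfrac{p^{-r\sg}}{r}\tfrac{(r\log p)^{4j-2}}{(4j-2)!}$, resums to $\sum_p\sum_r\tfrac{p^{-r\sg}}{2r}\bigl(\cosh(r\log p)-\cos(r\log p)\bigr)$, whose $r=1$ block is comparable to $\sum_p p^{1-\sg}$ and hence diverges for $1<\sg\le 2$ (consistent with the Taylor series of $\Gm_\sg$ having radius of convergence $\sg-1$); your argument keeps the quadratic bound inside the integral and invokes only the finiteness of the second moment, sidestepping this delicacy entirely.
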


\begin{proof}
It follows from Lemma \ref{coef} that 
    \begin{align*}
    |f_\sg(t)|
    &= \Big|\exp \Big ( \sum _{l=1} ^\infty \sum_{p \in \Prime}
    \sum_{r = 1}^{\infty}\frac{p^{-r\sg}}{r} \frac{(-{\rm i}r \log p)^l}{l!}  t^l \Big) \Big|\\
    &= \exp \Big ( \sum _{j=1} ^\infty \sum_{p \in \Prime}
    \sum_{r = 1}^{\infty}\frac{p^{-r\sg}}{r} 
    \frac{(r \log p)^{4j}}{(4j)!}  t^{4j} 
    - \sum _{j=1} ^\infty \sum_{p \in \Prime}
    \sum_{r = 1}^{\infty}\frac{p^{-r\sg}}{r} 
    \frac{(r \log p)^{4j-2}}{(4j-2)!} t^{4j-2}\Big) \\
    &\geq \exp\Big (- \sum _{j=1} ^\infty \sum_{p \in \Prime}
    \sum_{r = 1}^{\infty}\frac{p^{-r\sg}}{r} \frac{(r \log p)^{4j-2}}{(4j-2)!}  t^{4j-2}\Big) \\
    &\geq \exp \Big(- \sum _{j=1} ^\infty \sum_{p \in \Prime}
    \sum_{r = 1}^{\infty}\frac{p^{-r\sg}}{r} \frac{(r \log p)^{4j-2}}{(4j-2)!}  t^{2}\Big) = \exp(-C_\sigma t^2)
    \end{align*}
for every $t$ in a neighborhood of 0. 
\end{proof}
We put 
    \[
    \alpha_\sigma
    :=\sum_{p \in \mathbb{P}} 
    \frac{- \log p}{p^\sg -1}, \qquad 
    \beta_\sigma
    :=\sum_{p \in \Prime}
    \frac{ (\log p)^2}{2} \frac{p^\sg}{(p^\sg - 1)^2}.
    \]
Since it holds that 
$a_1(\sigma)=\ii \alpha_\sigma$ 
and $a_2(\sigma)=-\beta_\sigma$, 
we obtain 
    \begin{equation}\label{Gamma}
     \Gm_\sigma(t) 
     = {\rm i}\al_\sigma t - \bt_\sigma t^2 
     + \sum^\infty_{l =3} a_l(\sigma) t^l 
     \end{equation}
on a neighborhood of $0$. 
Moreover, it follows from 
Proposition \ref{Prop:zeta-levy-khintchine} that 
    \begin{align}
    \Gamma_\sg(t) 
    =\int^{\infty}_{0} \sum_{l=1}^\infty 
    \frac{(-{\rm i}tx)^l}{l!}\,N_\sg(\dd x) 
    = \sum_{l=1}^\infty  
    \Big(\int^{\infty}_{0} 
    \frac{(-{\rm i}x)^l}{l!}\,N_\sg(\dd x) \Big)  t^l  
    \label{Gamma2}
    \end{align}
on a neighborhood of 0. 
Therefore,  each coefficient $a_l(\sigma)$  is also represented  as
    \[
    a_l(\sigma) 
    = \int^{\infty}_{0} \frac{(-{\rm i}x)^l}{l!}N_\sg(dx), 
    \qquad l=1, 2, \dots.
    \]
Particularly, look at the cases where $l=1, 2$. 
It follows from \eqref{Levy-measure} that
    \[
    \begin{aligned}
    a_1(\sigma) 
    &=-\ii \int^{\infty}_{0} x 
    \sum_{p \in \Prime}\sum_{r = 1}^{\infty}
    \frac{p^{-r\sg}}{r} \dl_{r \log p} (\dd x)
    = -\ii\sum_{p \in \Prime}
    \sum_{r = 1}^{\infty} p^{-r\sg} \log p, \\
    a_2(\sigma) 
    &=-\frac{1}{2} \int^{\infty}_{0} x^2 
    \sum_{p \in \Prime}\sum_{r = 1}^{\infty}
    \frac{p^{-r\sg}}{r} \dl_{r \log p} (\dd x)
    = -\frac{1}{2}\sum_{p \in \Prime}\sum_{r = 1}^{\infty} 
    r p^{-r\sg} (\log p)^2.
    \end{aligned}
    \]
By putting it all together, we have the following. 
\begin{lm}\label{Lem:alphabeta-rep1}
It holds that
    \[
    \begin{aligned}
    \al_\sigma 
    &=-\int^{\infty}_{0} x \, N_\sg(\dd x)
    =\sum_{p \in \Prime} \frac{- \log p}{p^\sg -1}
    =-\sum_{p \in \Prime}\sum_{r = 1}^{\infty} p^{-r\sg} \log p, \\
    \beta_\sigma &=\frac{1}{2} \int^{\infty}_{0} x^2 \, N_\sg(\dd x)
    =\sum_{p \in \Prime}
    \frac{ (\log p)^2}{2} \cdot \frac{p^\sg}{(p^\sg - 1)^2}
    =\frac{1}{2}\sum_{p \in \Prime}\sum_{r = 1}^{\infty} 
    r p^{-r\sg} (\log p)^2.
    \end{aligned}
    \]
\end{lm}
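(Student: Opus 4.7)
The plan is to derive both identity chains by combining the explicit Lévy measure \eqref{Levy-measure} with elementary evaluations of power series in $p^{-\sigma}$, and then to match the results against the formulas for $a_1(\sigma)$ and $a_2(\sigma)$ already computed just before the lemma.

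First I would substitute \eqref{Levy-measure} into the integrals $\int_0^\infty x\,N_\sigma(\dd x)$ and $\int_0^\infty x^2\,N_\sigma(\dd x)$. The Dirac masses $\delta_{r\log p}$ evaluate the monomials at $x = r\log p$, and interchanging the double sum with the integral is legitimate by Tonelli's theorem since all summands are non-negative. This yields
\[
\int_0^\infty x\,N_\sigma(\dd x) = \sum_{p \in \Prime}(\log p)\sum_{r=1}^\infty p^{-r\sigma}, \qquad \int_0^\infty x^2\,N_\sigma(\dd x) = \sum_{p \in \Prime}(\log p)^2 \sum_{r=1}^\infty r\,p^{-r\sigma},
\]
after the factor $1/r$ appearing in $N_\sigma$ cancels against the appropriate power of $r$ inside $(r\log p)^k$.

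Next I would evaluate the two inner sums at $z = p^{-\sigma}$ via the standard identities $\sum_{r \ge 1} z^r = z/(1-z)$ and $\sum_{r \ge 1} r\,z^r = z/(1-z)^2$, both valid since $p^{-\sigma} < 1$ for $\sigma > 1$. Multiplying numerator and denominator by $p^\sigma$ or $p^{2\sigma}$ converts them into $1/(p^\sigma - 1)$ and $p^\sigma/(p^\sigma - 1)^2$, respectively; after prepending $-\log p$ or $\tfrac{1}{2}(\log p)^2$ and summing over $\Prime$, one recovers exactly the closed-form expressions defining $\alpha_\sigma$ and $\beta_\sigma$ just above the lemma. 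Absolute convergence of the outer prime sum is classical for $\sigma > 1$, controlled essentially by $-\zeta'(\sigma)/\zeta(\sigma)$ and $\zeta''(\sigma)/\zeta(\sigma)$, respectively.

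Finally, the right-most double-sum representations are read off directly from the formulas for $a_1(\sigma)$ and $a_2(\sigma)$ computed from \eqref{Levy-measure} immediately above, together with the identifications $a_1(\sigma) = \ii\alpha_\sigma$ and $a_2(\sigma) = -\beta_\sigma$ used to obtain \eqref{Gamma}. The argument is essentially a bookkeeping exercise; the only subtle point is the legitimacy of the summation interchanges, which is immediate from non-negativity of all summands, so no real obstacle is expected.
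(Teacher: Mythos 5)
Your proposal is correct and follows essentially the same route as the paper: substitute the explicit Lévy measure \eqref{Levy-measure} into the integrals, interchange sum and integral, evaluate the resulting geometric series $\sum_{r\ge 1} z^r$ and $\sum_{r\ge 1} r z^r$ at $z=p^{-\sigma}$, and match against the formulas for $a_1(\sigma)$ and $a_2(\sigma)$ via $a_1(\sigma)=\ii\alpha_\sigma$, $a_2(\sigma)=-\beta_\sigma$. The paper leaves the geometric-series step implicit and doesn't mention Tonelli, but the argument is the same; no gaps.
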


Here, we also give another expressions 
of the constants $\al_\sigma$ and $\bt_\sigma$
in terms of the Riemann zeta random variable. 
Let $X_\sigma$ be a random variable 
whose distribution is $\mu_\sigma$. 
Then, we can show that 
\begin{equation}\label{alphabeta}
    \al_\sigma = \E[X_\sigma]
    =\frac{\zeta'(\sigma)}{\zeta(\sigma)}, 
    \qquad \bt_\sigma = \frac{1}{2}\mathrm{Var}(X_\sigma)
    =\frac{1}{2\zt(\sg)^2}  
    \{\zeta(\sigma)\zt''(\sg) - \zt'(\sg)^2\}.
    \end{equation}
Indeed, we have
    \[
    \begin{aligned}
    \frac{\dd}{\dd t} f_\sg(t) 
    = \frac{{\rm i} \zt'(\sg + {\rm i}t)}{\zt(\sg)}, \qquad
    \frac{\dd^2}{\dd t^2} f_\sg(t) 
    = -\frac{ \zt''(\sg + {\rm i}t)}{\zt(\sg)}
    \end{aligned}
    \]
for $t \in \R$. 
By letting $t = 0$ and 
by using Proposition \ref{Prop:exp-var}, 
we obtain
    \begin{align}
    \frac{\dd}{\dd t} f_\sg(0) = {\rm i}\frac{\zt'(\sg)}{\zt(\sg)} = {\rm i} \E[X_\sigma], \qquad
    \frac{\dd^2}{\dd t^2} f_\sg(0) = - \frac{\zt''(\sg)}{\zt(\sg)} = -\E [X^2].
    \label{beta-rep-1}
    \end{align}
On the other hand, 
the Lebesgue convergence theorem and 
Proposition \ref{Prop:zeta-levy-khintchine} imply
    \begin{align*}
    \frac{\dd}{\dd t} f_\sg(t) 
    &= -\ii f_\sg(t) \Big( \int^{\infty}_{0} 
    x \e^{-{\rm i}tx} \, N_\sg(\dd x)\Big),
    \qquad t \in \R,\\
    \frac{\dd^2}{\dd t^2}f_\sg(t) 
    &= f_\sg(t)\Big\{-\Big(\int^{\infty}_{0} 
    x \e^{-{\rm i}tx} \,N_\sg(\dd x)\Big)^2
    -\int^{\infty}_{0} x^2 \e^{-{\rm i}tx} \, N_\sg(\dd x)\Big\}.
    \qquad t\in \R. 
    \end{align*}
By letting $t = 0$, we also have 
    \begin{align}
    \frac{\dd}{\dd t} f_\sg(0) 
    &= -{\rm i} \int^{\infty}_{0} x \, N_\sg(\dd x), 
    \label{alpha-rep-2} \\
    \frac{\dd^2}{\dd t^2} f_\sg(0) 
    &= - \Big(\int^{\infty}_{0} x \, N_\sg(\dd x) \Big)^2 
    - \int^{\infty}_{0} x^2 \, N_\sg(\dd x).
    \label{beta-rep-2}
    \end{align}
Therefore, by combining Lemma \ref{Lem:alphabeta-rep1} with 
\eqref{beta-rep-1},  \eqref{alpha-rep-2} 
and \eqref{beta-rep-2}, we obtain
$$
\begin{aligned}
\mathbb{E}[X_\sigma] =\alpha_\sigma, \qquad 
\mathbb{E}[X_\sigma^2] =\alpha_\sigma^2+2\beta_\sigma, \qquad 
\mathrm{Var}(X_\sigma)=\mathbb{E}[X_\sigma^2]-(\mathbb{E}[X_\sigma])^2
=2\beta_\sigma,
\end{aligned}
$$
which are the desired equalities \eqref{alphabeta}.

The following lemma gives the upper estimate 
of $|f_\sigma(t)|$ on a neighborhood of 0.

\begin{lm}\label{Lem:upper estimate}
There exists a positive constant $B_\sigma > 0 $  such that
    \[
    |\Gm_\sigma(t) - {\rm i}\al_\sigma t + \bt_\sigma t^2| 
    \leq B_\sigma|t|^3, 
    \]
and 
    \begin{equation}
    \label{upper-bound}
     |f_\sg (t)| \leq \exp\Big(-\frac{1}{2}\beta_\sigma t^2\Big) 
    \end{equation}
hold on a neighborhood of $0$.   
\end{lm}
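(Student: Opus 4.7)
The plan is to establish the cubic remainder estimate first, and then deduce the Gaussian-type upper bound \eqref{upper-bound} by exponentiating.

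For the first inequality, \eqref{Gamma} gives
\[
\Gamma_\sigma(t) - {\rm i}\alpha_\sigma t + \beta_\sigma t^2
= \sum_{l=3}^\infty a_l(\sigma) t^l
\]
on a neighborhood of $0$. The key input is that this Taylor series has positive radius of convergence, say $\rho > 0$. This follows from the fact that $\zeta(s)$ is analytic in a complex neighborhood of $s = \sigma$ and satisfies $\zeta(\sigma) > 0$ for $\sigma > 1$, so $\zeta(\sigma + {\rm i}t) \neq 0$ for $|t|$ sufficiently small; hence $\Gamma_\sigma(t) = \log\{\zeta(\sigma + {\rm i}t)/\zeta(\sigma)\}$ extends holomorphically to some complex disc around $t = 0$. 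Fix any $r \in (0, \rho)$. For $|t| \leq r$,
\[
\Big|\sum_{l=3}^\infty a_l(\sigma) t^l \Big|
\leq |t|^3 \sum_{l=3}^\infty |a_l(\sigma)| r^{l-3},
\]
and the right-hand sum is finite since $\sum_{l=3}^\infty |a_l(\sigma)| r^l$ converges. Setting $B_\sigma := \sum_{l=3}^\infty |a_l(\sigma)| r^{l-3}$ yields the first bound on the neighborhood $\{|t| \leq r\}$.

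For the second inequality, I use $|f_\sigma(t)| = \exp(\mathrm{Re}\,\Gamma_\sigma(t))$. Since $\alpha_\sigma, \beta_\sigma \in \R$, we have $\mathrm{Re}({\rm i}\alpha_\sigma t) = 0$ and $\mathrm{Re}(-\beta_\sigma t^2) = -\beta_\sigma t^2$; combined with the first inequality, this yields
\[
\mathrm{Re}\,\Gamma_\sigma(t) \leq -\beta_\sigma t^2 + B_\sigma |t|^3.
\]
By further shrinking the neighborhood so that $B_\sigma |t| \leq \beta_\sigma/2$, i.e., $|t| \leq \beta_\sigma/(2B_\sigma)$, the cubic term is absorbed into half of the quadratic term, giving $\mathrm{Re}\,\Gamma_\sigma(t) \leq -\frac{1}{2}\beta_\sigma t^2$, and exponentiating produces \eqref{upper-bound}.

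The only nontrivial point is the positive radius of convergence of the Taylor series, which is secured via analyticity of $\zeta$ at $\sigma$ together with $\zeta(\sigma) \neq 0$. A more hands-on alternative would be to use the integral representation to bound $|a_l(\sigma)| \leq \frac{1}{l!}\int_0^\infty x^l\,N_\sigma({\rm d}x)$ via \eqref{Levy-measure} and then estimate $\sum_{p}(\log p)^l \sum_{r} r^{l-1} p^{-r\sigma}$ to get exponential-type coefficient bounds directly; but the analyticity argument is cleaner and makes the choice of neighborhood transparent.
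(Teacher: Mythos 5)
Your proof is correct, but it reaches the cubic remainder bound by a genuinely different route than the paper. The paper bounds the Taylor tail termwise using the explicit expression for $a_l(\sigma)$ from Lemma \ref{coef}, ending with the constant $B_\sigma = \sum_{l\ge 2}\sum_{p\in\mathbb{P}}\sum_{r\ge 1} p^{-r\sigma}(r\log p)^l/(r\,l!)$; you instead invoke analyticity and nonvanishing of $\zeta$ near $s=\sigma$ to conclude that $\Gamma_\sigma$ extends holomorphically to a complex disc about $t=0$, so the Taylor series has positive radius of convergence $\rho$, and for $|t|\le r<\rho$ the tail is controlled by absolute convergence alone. Your route is arguably cleaner and sidesteps a subtlety in the paper's explicit constant: as literally written, the inner $l$-sum evaluates to $p^{r}-1-r\log p$, and the resulting double sum over $p,r$ diverges for $1<\sigma\le 2$; making the paper's argument close requires absorbing a geometric factor $\delta^{l-3}$ tied to the chosen neighborhood, which is automatic in your formulation since you keep $r<\rho$. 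The alternative you sketch at the end via \eqref{Levy-measure} is essentially the paper's approach, done carefully. The passage from the cubic remainder bound to \eqref{upper-bound} — via $|f_\sigma(t)|=\exp(\mathrm{Re}\,\Gamma_\sigma(t))$ and shrinking the neighborhood so that $B_\sigma|t|\le\beta_\sigma/2$ — matches the paper exactly.
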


\begin{proof}
On a neighborhood of 0, we have
    \begin{align*}
     |\Gm_\sg(t) - {\rm i}\al_\sigma t + \bt_\sigma t^2| 
    &= \Big|\sum_{l = 3}^ \infty a_l(\sigma) t^l\Big| 
    = \Big|\sum_{l = 3}^ \infty \sum _{p \in \Prime} 
    \sum_{r= 1}^\infty \frac{p^{-r\sg}}{r} 
    \frac{(-{\rm i}r \log p)^l}{l!}   t^l \Big | \\
    &\leq  \sum_{l = 2}^ \infty \sum _{p \in \Prime} \sum_{r= 1}^\infty 
    \frac{p^{-r\sg}}{r} \frac{(r \log p)^l}{l!}  |t|^3 
    = B_\sigma|t|^3.
    \end{align*}
Moreover, for every $t$ in a neighborhood of 0, we have 
    \begin{align*}
    |f_\sg(t) | 
    &= \Big|\exp \Big( {\rm i} \al_\sigma t -\bt_\sg t^2 
    + \sum_{l = 3}^\infty a_l(\sigma) t^l \Big)\Big|\\
    &\le \exp \Big( -\bt_\sg t^2 
    + \Big| \sum_{l = 3}^\infty a_l(\sg) t^l \Big| \Big) 
    \le \exp\Big((- \bt_\sg + B_\sg|t|)t^2\Big).
    \end{align*}
By taking the neighborhood of 0 being sufficiently small, 
we have 
    \[
    - \bt_\sg + B_\sg|t| \leq -\frac{1}{2}\beta_\sg,
    \]
which concludes \eqref{upper-bound}  
on the neighborhood of 0. 
\end{proof}


\section{{\bf Proof of Theorem \ref{Thm:Local limit theorem}}}

The aim of this section is to give a proof of the local limit theorem 
for the convolution power of the Riemann zeta distribution $\mu_\sigma$ with 
fixed $\sigma>1$ (Theorem \ref{Thm:Local limit theorem}). 
The following function will play a crucial role in the proof. 
We define a function $p_\sigma : \mathbb{R} \to \mathbb{R}$ by 
    \[
    p_\sigma(x)
    :=\frac{1}{2\pi}\int_{\R} 
    \e^{-\ii xu} \e^{-\beta_\sigma u^2} \, \dd u
    =\frac{1}{\sqrt{4\pi \beta_\sigma}}
    \exp\Big(-\frac{x^2}{4\beta_\sigma}\Big), 
    \qquad x \in \R,
    \]
which is referred to the heat kernel on $\R$ 
evaluated at time $\beta_\sigma>0$. 
In probability theory, 
it is well-known that the convolution powers 
of probability distributions 
are often approximated by the heat kernel. 
In this section, we also see that such local limit theorems
are valid for the case of Riemann zeta distributions.
For this sake, we first show the following lemma. 

\begin{lm}\label{Lem:local-limit-1}
For any $\ve>0$, there exist $\delta>0$ and $N \in \N$ such that 
    \begin{equation}
    \label{Local-limit-1}
    \Big|\frac{\sqrt{n}}{2\pi}\int_{|t|<\delta} 
    f_\sigma(t)^n \e^{-\ii x t} \, \dd t 
    - p_\sigma\Big(\frac{x-\alpha_\sigma n}{\sqrt{n}}\Big)\Big|
    <\ve
    \end{equation}
for all $n \ge N$ and $x \in \R$. 
\end{lm}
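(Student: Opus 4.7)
The plan is to rescale the integral by $u = t\sqrt{n}$, express $f_\sigma(u/\sqrt{n})^n$ via the Taylor expansion of $\Gm_\sigma$, and then apply the dominated convergence theorem. The key technical input will be Lemma \ref{Lem:upper estimate}, which plays two roles at once: it will provide an $x$-uniform Gaussian dominator, and it will tame the cubic Taylor remainder on the growing window $|u|<\delta\sqrt{n}$.

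First I would fix $\delta>0$ small enough that, on $|t|<\delta$, both the Taylor expansion
$\Gm_\sigma(t) = \ii\al_\sigma t - \bt_\sigma t^2 + R_\sigma(t)$ with $|R_\sigma(t)| \le B_\sigma|t|^3$ and the upper bound $|f_\sigma(t)| \le \e^{-\bt_\sigma t^2/2}$ from Lemma \ref{Lem:upper estimate} hold simultaneously. Setting $y_n := (x-\al_\sigma n)/\sqrt{n}$ and $R_n(u) := n R_\sigma(u/\sqrt{n})$, the substitution $u=t\sqrt{n}$ together with
$f_\sigma(u/\sqrt{n})^n = \exp\bigl(\ii\al_\sigma u\sqrt{n} - \bt_\sigma u^2 + R_n(u)\bigr)$
would rewrite the left-hand integral of \eqref{Local-limit-1} as
$$
\frac{1}{2\pi}\int_{|u|<\delta\sqrt{n}} \e^{-\ii y_n u}\,\e^{-\bt_\sigma u^2}\,\e^{R_n(u)}\,\dd u,
$$
while Gaussian Fourier inversion gives $p_\sigma(y_n) = \frac{1}{2\pi}\int_{\R} \e^{-\ii y_n u}\,\e^{-\bt_\sigma u^2}\,\dd u$. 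Subtracting and taking absolute values, the quantity inside $|\cdot|$ in \eqref{Local-limit-1} is bounded by
$$
\frac{1}{2\pi}\int_{|u|<\delta\sqrt{n}} \e^{-\bt_\sigma u^2}\bigl|\e^{R_n(u)}-1\bigr|\,\dd u
\;+\;\frac{1}{2\pi}\int_{|u|\ge \delta\sqrt{n}} \e^{-\bt_\sigma u^2}\,\dd u,
$$
and since $|\e^{-\ii y_n u}|=1$, these bounds no longer involve $x$, so uniformity in $x$ is delivered for free.

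Disposing of the two pieces is then routine. The tail integral tends to $0$ as $n\to\infty$ by integrability of the Gaussian. For the main integral, Lemma \ref{Lem:upper estimate} yields
$\e^{-\bt_\sigma u^2}\,|\e^{R_n(u)}| = |f_\sigma(u/\sqrt{n})|^n \le \e^{-\bt_\sigma u^2/2}$
on $|u|<\delta\sqrt{n}$, so the integrand $\e^{-\bt_\sigma u^2}|\e^{R_n(u)}-1|$ is dominated by $2\,\e^{-\bt_\sigma u^2/2}$, an $n$-independent integrable function on $\R$. Since $|R_n(u)| \le B_\sigma|u|^3/\sqrt{n}\to 0$ pointwise, $\e^{R_n(u)}-1\to 0$ pointwise, and dominated convergence would drive the main integral to $0$. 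Choosing $N$ so that both contributions fall below $\ve/2$ for $n\ge N$ completes the plan. The only point I expect to require genuine care is the need for uniformity in $x\in\R$ in the presence of an unbounded parameter $y_n$; fortunately the modulus-one phase $\e^{-\ii y_n u}$ makes every absolute-value estimate $x$-free, and the essential nontrivial input is the quadratic upper bound of Lemma \ref{Lem:upper estimate}, without which the cubic Taylor remainder alone would fail to dominate on a window of width growing like $\sqrt{n}$.
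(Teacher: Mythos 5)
Your argument is correct and follows essentially the same route as the paper: rescale by $u=t\sqrt{n}$, write $p_\sigma$ via Gaussian Fourier inversion, use $|\e^{-\ii y_n u}|=1$ to make all estimates $x$-free, invoke Lemma \ref{Lem:upper estimate} for the crucial $n$-uniform Gaussian dominator on the growing window, and finish with dominated convergence. The only cosmetic difference is that the paper introduces an auxiliary fixed cutoff $M$ and splits into three pieces $\mathcal{I}_1,\mathcal{I}_2,\mathcal{I}_3$, whereas you split directly at $\delta\sqrt{n}$ and apply dominated convergence over all of $\R$; these are organizationally different but mathematically equivalent.
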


\begin{proof}
Since the function $u \mapsto \exp(-\beta_\sigma u^2/2)$ is 
Lebesgue integrable on $\R$, 
for any $\ve>0$, there exists $M>0$ such that 
    \begin{equation}\label{integrable}
    \int_{|u| \ge M} 
    \exp\Big(-\frac{1}{2}\beta_\sigma u^2\Big) \, \dd u 
    <\frac{2\pi \ve}{3}. 
    \end{equation}
For such $M>0$, we take a sufficiently large $n \in \N$
with $M<\delta \sqrt{n}$.  
By using Lemma \ref{Lem:upper estimate}, 
there exists $\delta>0$ such that 
$|u|<\delta \sqrt{n}$ implies  
    \begin{align}
    \Big|\exp\Big(-\frac{\ii \alpha_\sigma u}{\sqrt{n}}\Big)f_\sigma\Big(\frac{u}{\sqrt{n}}\Big)\Big|^n
    &\le \exp\Big(-\frac{1}{2}\beta_\sigma \Big(\frac{u}{\sqrt{n}}\Big)^2\Big)^n
    =\exp\Big(-\frac{1}{2}\beta_\sigma u^2\Big). 
    \label{est}
    \end{align}
Since it holds that
    \[
    \begin{aligned}
    &\frac{\sqrt{n}}{2\pi}\int_{|t|<\delta} 
    f_\sigma(t)^n \e^{-\ii x t} \, \dd t \\
    &=\frac{1}{2\pi} \int_{|u|<\delta\sqrt{n}} 
    \Big\{\exp\Big(-\frac{\ii \alpha_\sigma u}{\sqrt{n}}\Big)
    f_\sigma\Big(\frac{u}{\sqrt{n}}\Big)\Big\}^n
    \exp\Big(-\ii u\frac{x- \alpha_\sigma n}{\sqrt{n}}\Big) \, \dd u, 
    \end{aligned}
    \]
we have 
    \[
    \begin{aligned}
    &\Big|\frac{\sqrt{n}}{2\pi}\int_{|t|<\delta} 
    f_\sigma(t)^n \e^{-\ii x t} \, \dd t 
    - p_\sigma\Big(\frac{x-\alpha_\sigma n}{\sqrt{n}}\Big)\Big| \\
    &=\Big|\frac{1}{2\pi} \int_{|u|<\delta\sqrt{n}} 
    \Big\{\exp\Big(-\frac{\ii \alpha_\sigma u}{\sqrt{n}}\Big)f_\sigma
    \Big(\frac{u}{\sqrt{n}}\Big)\Big\}^n
    \exp\Big(-\ii u\frac{x- \alpha_\sigma n}{\sqrt{n}}\Big) \, \dd u\\
    &\hspace{1cm}- \frac{1}{2\pi} \int_{\R} 
    \exp\Big(-\ii u\frac{x- \alpha_\sigma n}{\sqrt{n}}\Big)
     \e^{-\beta_\sigma u^2} \, \dd u \Big|\\
    &\le \frac{1}{2\pi} \int_{|u|<M} 
     \Big|\Big\{\exp\Big(-\frac{\ii \alpha_\sigma u}{\sqrt{n}}\Big)
     f_\sigma\Big(\frac{u}{\sqrt{n}}\Big)\Big\}^n
     -\e^{-\beta_\sigma u^2} \Big| \, \dd u \\
    &\hspace{1cm}+\frac{1}{2\pi} \int_{M \le |u| < \delta\sqrt{n}}
    \Big|\exp\Big(-\frac{\ii \alpha_\sigma u}{\sqrt{n}}\Big)
    f_\sigma\Big(\frac{u}{\sqrt{n}}\Big)\Big|^n \, \dd u 
    +\frac{1}{2\pi}\int_{|u| \ge M}\e^{-\beta_\sigma u^2} \, \dd u\\
    &=: \mathcal{I}_1+\mathcal{I}_2+\mathcal{I}_3,
    \end{aligned}
    \]
We first consider the term $\mathcal{I}_1$. 
Since 
    \[
    \begin{aligned}
    &\lim_{n \to \infty}
     \Big|\Big\{\exp\Big(-\frac{\ii \alpha_\sigma u}{\sqrt{n}}\Big)
     f_\sigma\Big(\frac{u}{\sqrt{n}}\Big)\Big\}^n
     -\e^{-\beta_\sigma u^2} \Big| \\
    &=\lim_{n \to \infty} 
    \e^{- \beta_\sigma u^2}
    \Big|\exp\Big(\frac{1}{\sqrt{n}}
    \sum_{l=3}^\infty a_l(\sigma)\frac{u^l}{n^{(l-3)/2}} \Big)
    -1\Big|=0, 
    \qquad |u| \le M,
    \end{aligned}
    \]
there exists $N \in \N$ such that $N>(M/\delta)^2$ implies $\mathcal{I}_1<\ve/3$ 
by applying the Lebesgue convergence theorem. 
As for the terms $\mathcal{I}_2$ and $\mathcal{I}_3$, \eqref{integrable} and \eqref{est} yield
    \[
    \begin{aligned}
    \mathcal{I}_2 
    &\le \frac{1}{2\pi} \int_{M \le |u| < \delta\sqrt{n}}
    \exp\Big(-\frac{1}{2}\beta_\sigma u^2\Big) \, \dd u 
    \le  \frac{1}{2\pi} \int_{|u| \ge M} \exp\Big(-\frac{1}{2}\beta_\sigma u^2\Big) \, \dd u 
    <\frac{\ve}{3}, \\
    \mathcal{I}_3 
    &\le \frac{1}{2\pi}\int_{|u| \ge M}
    \e^{-\beta_\sigma u^2} \, \dd u 
    \le  \frac{1}{2\pi} \int_{|u| \ge M} \exp\Big(-\frac{1}{2}\beta_\sigma u^2\Big) \, \dd u 
    <\frac{\ve}{3}. 
    \end{aligned}
    \]
By putting it all together, 
we have established \eqref{Local-limit-1}. 
\end{proof}

We are now ready for the proof of 
Theorem \ref{Thm:Local limit theorem}.

\begin{proof}[Proof of Theorem \ref{Thm:Local limit theorem}]
Since the characteristic function 
$f_\sigma(t)$ is obtained 
as the Fourier transform of 
the function $\mu_\sigma(x)$, we also obtain 
    \begin{equation}\label{inverse-Fourier}
    \mu_\sigma^{*n}(x)
    =\frac{1}{2\pi}\int_{-\pi}^\pi 
    f_\sigma(t)^n \e^{-\ii xt} \, \dd t, \qquad 
    n \in \N, \, x \in \Lambda_n.
    \end{equation}
Let $\ve>0$. 
By taking a sufficiently small $\delta>0$, we have 
    \begin{align}
    \sqrt{n}\mu_\sigma^{*n}(x) 
    &= \frac{\sqrt{n}}{2\pi} \int_{|t|<\pi}
    f_\sigma(t)^n \e^{-\ii xt} \, \dd t\nonumber  \\
    &=\frac{\sqrt{n}}{2\pi} \int_{|t| < \delta} 
    f_\sigma(t)^n \e^{-\ii xt} \, \dd t
    +\frac{\sqrt{n}}{2\pi} \int_{\delta \le |t| <\pi} 
    f_\sigma(t)^n \e^{-\ii xt} \, \dd t \nonumber \\
    &=: \mathcal{J}_1+\mathcal{J}_2 \nn
    \label{eq:LL1}
    \end{align}
for all $x \in \Lambda_n$.
Let us consider the term $\mathcal{J}_1$. By virtue of 
Lemma \ref{Lem:local-limit-1}, 
there exist $\delta>0$ and $N \in \N$ such that 
    \begin{equation*}
    \Big|\frac{\sqrt{n}}{2\pi}\int_{|t|<\delta} 
    f_\sigma(t)^n \e^{-\ii xt} \, \dd t 
    - p_\sigma\Big(\frac{x-\alpha_\sigma n}{\sqrt{n}}\Big)\Big|
    <\frac{\ve}{2}. 
    \end{equation*}
for $n \ge N$ and $x \in \Lambda_n$. 
Hence, we have 
    \[
    |\mathcal{J}_1| \le p_\sigma
    \Big(\frac{x-\alpha_\sigma n}{\sqrt{n}}\Big)
    +\frac{\ve}{2}, \qquad x \in \Lambda_n.
    \]
On the other hand, as for the term $\mathcal{J}_2$, we also have
    \[
    |\mathcal{J}_2| \le 
    \frac{\sqrt{n}}{2\pi}\int_{\delta \le |t|<\pi} 
    |f_\sigma(t)|^n \, \dd t
    \le \sqrt{n}\Big(\sup_{\delta \le |t|<\pi}|f_\sigma(t)|\Big)^n. 
    \]
By noting the fact that $\sup_{\delta \le |t|<\pi}|f_\sigma(t)|<1$, 
we take a sufficiently large $N' \in \N$ such that 
$n \ge N'$ implies that
$|\mathcal{J}_2|<\ve/2$. Therefore, we obtain 
    \[
    \Big|\sqrt{n}\mu_\sigma^{*n}(x) -p_\sigma\Big(\frac{x-\alpha_\sigma n}{\sqrt{n}}\Big)\Big|
    \le \ve
    \]
for all $n \ge \max\{N, N'\}$ and $x \in \Lambda_n$, 
which leads to 
    \[
    \mu_\sigma^{*n}(x)
    =\frac{1}{\sqrt{n}}p_\sigma
    \Big(\frac{x-\alpha_\sigma n}{\sqrt{n}}\Big)
    +o\Big(\frac{1}{\sqrt{n}}\Big)
    \]
as $n \to \infty$ for all $x \in \Lambda_n$. 
\end{proof}

\section{{\bf Proof of Theorem 
\ref{Thm:lower and upper bound}
}}

In order to show Theorem 
\ref{Thm:lower and upper bound}, 
we need to give the proof of the following lemma.

\begin{lm}\label{Lem:pre}
There exists a sufficiently small $\delta>0$ such that 
$$
\Big|\frac{1}{2\pi} \int_{|t|<\delta} f_\sigma(t)^n \e^{-\ii xt} \, \dd t\Big| \le \frac{1}{\sqrt{n\beta_\sigma}}, 
\qquad n \in \N, \, x \in \Lambda_n. 
$$
\end{lm}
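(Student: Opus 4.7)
The plan is to reduce the claim to a standard Gaussian integral computation by applying the local upper estimate on $|f_\sigma(t)|$ already obtained in Lemma \ref{Lem:upper estimate}.

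First, I would invoke Lemma \ref{Lem:upper estimate}, which guarantees the existence of a sufficiently small $\delta>0$ such that $|f_\sigma(t)|\le \exp(-\beta_\sigma t^2/2)$ for all $t$ with $|t|<\delta$. I fix such a $\delta$ at the outset. Raising this pointwise inequality to the $n$-th power yields $|f_\sigma(t)^n|\le \exp(-n\beta_\sigma t^2/2)$ on the same neighborhood, uniformly in $n$.

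Next, I would estimate the integral by passing to absolute values, noting that the phase factor $\e^{-\ii xt}$ has modulus one. This gives, for every $n\in\N$ and every $x\in\Lambda_n$,
\[
\Big|\frac{1}{2\pi}\int_{|t|<\delta} f_\sigma(t)^n \e^{-\ii xt}\,\dd t\Big|
\le \frac{1}{2\pi}\int_{|t|<\delta}\exp\Big(-\frac{n\beta_\sigma t^2}{2}\Big)\,\dd t
\le \frac{1}{2\pi}\int_{\R}\exp\Big(-\frac{n\beta_\sigma t^2}{2}\Big)\,\dd t.
\]
The rightmost quantity is a standard Gaussian integral equal to $\frac{1}{2\pi}\sqrt{2\pi/(n\beta_\sigma)}=\frac{1}{\sqrt{2\pi n \beta_\sigma}}$, which is bounded above by $\frac{1}{\sqrt{n\beta_\sigma}}$ since $2\pi>1$.

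There is essentially no obstacle here: the argument is a clean three-line consequence of Lemma \ref{Lem:upper estimate} followed by the explicit evaluation of a Gaussian integral. The only mild care required is choosing $\delta$ small enough that the Gaussian-type majorization of $|f_\sigma(t)|$ holds on the full interval $(-\delta,\delta)$, but this is exactly what Lemma \ref{Lem:upper estimate} provides, and the resulting bound is uniform in $n$ and in $x\in\Lambda_n$ as required.
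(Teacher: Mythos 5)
Your proof is correct and follows exactly the same route as the paper's: apply Lemma \ref{Lem:upper estimate} to majorize $|f_\sigma(t)|$ by a Gaussian on $(-\delta,\delta)$, extend the domain of integration to all of $\R$, and evaluate the resulting Gaussian integral. In fact you are slightly more careful than the paper at the last step: the Gaussian integral gives $\frac{1}{\sqrt{2\pi n\beta_\sigma}}$ (the paper writes an equality $=\frac{1}{\sqrt{n\beta_\sigma}}$, which should really be a $\le$, as you note).
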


\begin{proof}
By applying Lemma \ref{Lem:upper estimate}, there exists a sufficiently small 
$\delta>0$ such that 
$$
|f_\sigma(t)| \le \exp\Big(-\frac{1}{2}\beta_\sigma t^2\Big), \qquad |t|<\delta. 
$$
Therefore, we obtain 
    \[
    \begin{aligned}
    \Big|\frac{1}{2\pi} \int_{|t|<\delta} 
    f_\sigma(t)^n \e^{-\ii xt} \, \dd t \Big|
    &\le \frac{1}{2\pi} \int_{|t|<\delta} 
    |f_\sigma(t)|^n \, \dd t \\
    &\le \frac{1}{2\pi} \int_{\R}
    \exp\Big(-\frac{n}{2}\beta_\sigma t^2\Big) \, \dd t 
    =\frac{1}{\sqrt{n\beta_\sigma}}
    \end{aligned}
    \]
for $n \in \N$ and $x \in \Lambda_n$. 
\end{proof}

At last, we give the proof of Theorem \ref{Thm:lower and upper bound}.

\begin{proof}
[Proof of Theorem \ref{Thm:lower and upper bound}]

By virtue of \eqref{inverse-Fourier} 
and Lemma \ref{Lem:upper estimate}, there is a sufficiently small $\delta>0$
such that 
    \[
    \begin{aligned}
    \mu_\sigma^{*n}(x) 
    &\le \Big|\frac{1}{2\pi}\int_{|t|<\delta} 
    f_\sigma(t)^n \e^{-\ii xt} \, \dd t\Big| + \Big|\frac{1}{2\pi}\int_{\delta \le |t|<\pi} 
    f_\sigma(t)^n \e^{-\ii xt} \, \dd t\Big|\\
    &\le \frac{1}{\sqrt{n\beta_\sigma}} 
    + \Big(\sup_{\delta \le |t| <\pi} 
    |f_\sigma(t)|\Big)^n, \qquad 
    n \in \N, \, x \in \Lambda_n.
    \end{aligned}
    \]
By noting $\sup_{\delta \le |t| <\pi} |f_\sigma(t)| <1$
and by taking the supremum over $x \in \Lambda_n$, there exists a sufficiently large $C_\sigma>0$ such that  
    \[
    \|\mu_\sigma^{*n}\|_\infty 
    \le \frac{C_\sigma}{\sqrt{n}},  
    \qquad n \in \N,
    \]
which is the desired upper bound.
\end{proof}

\section{{\bf Conclusion}}

Throughout the present paper, 
we have discussed asymptotic behaviors 
including local limit theorems
for $n$-th convolution powers of Riemann zeta distributions
as $n \to \infty$. 
We believe that our result exactly 
contributes to the study of 
convolution powers of complex functions
in that the function treated in the present paper 
has a not finite but countable support. 
On the other hand, 
we can find several studies in which 
relations between generalizations of 
the Riemann zeta function and 
probability distributions on $\R^d$
are discussed. 
We refer to Hu--Iksanov--Lin--Zakusylo \cite{HILZ} for 
probability distributions generated by 
the Hurwitz zeta function
and Aoyama--Nakamura \cite{AN1, AN2} for those 
generated by certain classes of 
multidimensional zeta functions.
Such zeta functions are also 
countably supported and 
have large amount of applications 
in both probability theory and number theory. 
Hence, further studies for investigating 
precise asymptotics of convolution powers 
of zeta distributions 
are expected, which should be interesting problems 
to reveal.

\vspace{3mm}
\noindent
{\bf Acknowledgement.} 
The second-named author is supported by JSPS KAKENHI
Grant number 19K23410.



\end{document}